\theoremstyle{plain}
\newtheorem{theorem}{Theorem}
\theoremstyle{definition}
\newtheorem{example}{Example}
\theoremstyle{remark}
\begin{document}

    \title{Branching $k$-path vertex cover of forests}
    \author{Mikhail Makarov\orcidlink{0009-0004-3604-4547}\footnote{mikhail.makarov.math@gmail.com}}
    \date{}
    \maketitle

    \begin{abstract}
        We define a set $P$ to be a \emph{branching $k$-path vertex cover} of an undirected forest $F$ if all leaves and isolated vertices (vertices of degree at most $1$) of $F$ belong to $P$ and every path on $k$ vertices (of length $k-1$) contains either a branching vertex (a vertex of degree at least $3$) or a vertex belonging to $P$. We define the \emph{branching $k$-path vertex cover number} of an undirected forest $F$, denoted by $\psi_b(F,k)$, to be the number of vertices in the smallest branching $k$-path vertex cover of $F$. These notions for a rooted directed forest are defined similarly, with natural adjustments. We prove the lower bound $\psi_b(F,k) \geq \frac{n+3k-1}{2k}$ for undirected forests, the lower bound $\psi_b(F,k) \geq \frac{n+k}{2k}$ for rooted directed forests, and that both of them are tight.
    \end{abstract}

    \section{Introduction}
        Throughout the article, we follow the standard convention that a \emph{path} in a graph is defined to have all its vertices distinct. Also, we follow the standard convention that a \emph{leaf} in an undirected graph is defined as a vertex of degree exactly $1$, and a \emph{leaf} in a directed graph is defined as a vertex of out-degree $0$. Thus, leaves in an undirected graph do not include isolated vertices, while leaves in a directed graph do.
        
        We define a set $P$ to be a \emph{branching $k$-path vertex cover} of an undirected forest $F$ if all leaves and isolated vertices (vertices of degree at most $1$) of $F$ belong to $P$ and every path on $k$ vertices (of length $k-1$) contains either a branching vertex (a vertex of degree at least $3$) or a vertex belonging to $P$. We define the \emph{branching $k$-path vertex cover number} of an undirected forest $F$, denoted by $\psi_b(F,k)$, to be the number of vertices in the smallest branching $k$-path vertex cover of $F$.

        We investigate the problem of establishing a lower bound on $\psi_b(F,k)$ in terms of $n$ and $k$. We first solve that problem for the directed case (for rooted directed forests), and then derive the undirected case from the directed one.

        We say that a directed graph is a \emph{rooted directed forest} if the undirected graph obtained from it by removing the orientations of the edges is a forest in which each connected component has a designated vertex, called the \emph{root}, such that all arcs corresponding to the edges of that connected component are oriented away from that root.
        
        The definition of a branching $k$-path vertex cover for the directed case is slightly different. Naturally, it involves a directed path instead of an undirected one, a branching vertex is defined as a vertex of out-degree at least $2$, but, most noticeably, we do not need to separately require isolated vertices to belong to $P$, because leaves, defined as vertices of out-degree $0$, already include isolated vertices. Specifically, we define a set $P$ to be a \emph{branching $k$-path vertex cover} of a rooted directed forest $F$ if $P$ is a subset of the vertices of $F$ such that all leaves (vertices of out-degree $0$) belong to $P$ and every directed path on $k$ vertices (of length $k-1$) contains either a branching vertex (a vertex of out-degree at least $2$) or a vertex belonging to $P$. We define the \emph{branching $k$-path vertex cover number} of a rooted directed forest $F$, denoted by $\psi_b(F,k)$, to be the number of vertices in the smallest branching $k$-path vertex cover of $F$.

        The condition related to the one in our definition of the branching $k$-path vertex cover, that every path on $k$ vertices contains a vertex belonging to $P$ (without the alternative of containing a branching vertex and without the condition on leaves), was previously studied for undirected graphs as the problem of \emph{$k$-path vertex cover}~\cite{tu_survey_k-path_vertex_cover} and the associated \emph{$k$-path vertex cover number} of a graph $G$, which we denote by $\psi(G,k)$.
                
        Informally, the idea behind our branching variant is that more branching vertices result in more leaves, and the leaves are required to belong to $P$. So, although a branching vertex relaxes the condition for paths containing it by allowing them not to contain vertices from $P$, it still, in a way, forces more vertices to belong to $P$.

        As examples, we calculate $\psi_b(F,k)$ and $\psi(F,k)$ for the following two undirected trees.
        
        \begin{example}\label{example:with_7_vertices}
            \begin{figure}[htb]
            \centering
            \setlength{\unitlength}{0.4mm}
            \begin{picture}(252,67)(14,27)
                %on the left: the graph itself
                %the central vertex
                \put(50,50){\circle*{3}}
                
                %the path upwards
                \put(50,70){\circle*{3}}
                \put(50,90){\circle*{3}}
                \Line(50,50)(50,70)
                \Line(50,70)(50,90)
                
                %the path to the bottom-left
                \put(32.68,40){\circle*{3}}
                \put(15.36,30){\circle*{3}}
                \Line(50,50)(32.68,40)
                \Line(32.68,40)(15.36,30)
                
                %the path to the bottom-right
                \put(67.32,40){\circle*{3}}
                \put(84.64,30){\circle*{3}}
                \Line(50,50)(67.32,40)
                \Line(67.32,40)(84.64,30)

                %in the middle: the branching cover
                %the central vertex
                \put(140,50){\circle*{3}}
                
                %the path upwards
                \put(140,70){\circle*{3}}
                \put(140,90){\circle*{3}}
                \put(140,90){\circle{7}}
                \Line(140,50)(140,70)
                \Line(140,70)(140,90)
                
                %the path to the bottom-left
                \put(122.68,40){\circle*{3}}
                \put(105.36,30){\circle*{3}}
                \put(105.36,30){\circle{7}}
                \Line(140,50)(122.68,40)
                \Line(122.68,40)(105.36,30)
                
                %the path to the bottom-right
                \put(157.32,40){\circle*{3}}
                \put(174.64,30){\circle*{3}}
                \put(174.64,30){\circle{7}}
                \Line(140,50)(157.32,40)
                \Line(157.32,40)(174.64,30)

                %on the right: the classical cover
                %the central vertex
                \put(230,50){\circle*{3}}
                \put(230,50){\circle{7}}
                
                %the path upwards
                \put(230,70){\circle*{3}}
                \put(230,90){\circle*{3}}
                \Line(230,50)(230,70)
                \Line(230,70)(230,90)
                
                %the path to the bottom-left
                \put(212.68,40){\circle*{3}}
                \put(195.36,30){\circle*{3}}
                \Line(230,50)(212.68,40)
                \Line(212.68,40)(195.36,30)
                
                %the path to the bottom-right
                \put(247.32,40){\circle*{3}}
                \put(264.64,30){\circle*{3}}
                \Line(230,50)(247.32,40)
                \Line(247.32,40)(264.64,30)
            \end{picture}
            \caption{The tree $T_1$ from Example~\ref{example:with_7_vertices} (left), its branching $3$-path vertex cover (middle), and its (classical) $3$-path vertex cover (right).}
            \label{figure:example_with_7_vertices}
        \end{figure}
            Consider the tree shown in Figure~\ref{figure:example_with_7_vertices} on the left and denote it by $T_1$. It has $7$ vertices, $6$ edges, $3$ leaves, and $1$ branching vertex in the center. It is easy to see that $\psi_b(T_1,3)=3$ because any branching $3$-path vertex cover must contain the $3$ leaves and the $3$ leaves are sufficient, as shown in Figure~\ref{figure:example_with_7_vertices} in the middle, since the branching vertex covers the remaining paths on $3$ vertices that are not covered by the leaves (in fact, the branching vertex covers \emph{all} paths on $3$ vertices, both those that are already covered by the leaves and those that are not covered by the leaves). On the other hand, $\psi(T_1,3)=1$ because the single vertex in the center covers all paths on $3$ vertices, as shown in Figure~\ref{figure:example_with_7_vertices} on the right. So, we have $\psi_b(T_1,3)=3>1=\psi(T_1,3)$.
        \end{example}

        \begin{example}\label{example:with_16_vertices}
            \begin{figure}[htb]
                \centering
                \setlength{\unitlength}{0.3mm}
                \begin{picture}(386,171)(7,54)
                    %on the left: the graph itself
                    %the central path
                    \put(60,95){\circle*{3}}
                    \put(60,125){\circle*{3}}
                    \put(60,155){\circle*{3}}
                    \put(60,185){\circle*{3}}
                    \Line(60,95)(60,125)
                    \Line(60,125)(60,155)
                    \Line(60,155)(60,185)
                    
                    %the bottom-left path
                    \put(42.68,85){\circle*{3}}
                    \put(25.36,75){\circle*{3}}
                    \put(8.04,65){\circle*{3}}
                    \Line(60,95)(42.68,85)
                    \Line(42.68,85)(25.36,75)
                    \Line(25.36,75)(8.04,65)
                    
                    %the bottom-right path
                    \put(77.32,85){\circle*{3}}
                    \put(94.64,75){\circle*{3}}
                    \put(111.96,65){\circle*{3}}
                    \Line(60,95)(77.32,85)
                    \Line(77.32,85)(94.64,75)
                    \Line(94.64,75)(111.96,65)
                    
                    %the top-left path
                    \put(42.68,195){\circle*{3}}
                    \put(25.36,205){\circle*{3}}
                    \put(8.04,215){\circle*{3}}
                    \Line(60,185)(42.68,195)
                    \Line(42.68,195)(25.36,205)
                    \Line(25.36,205)(8.04,215)
                    
                    %the top-right path
                    \put(77.32,195){\circle*{3}}
                    \put(94.64,205){\circle*{3}}
                    \put(111.96,215){\circle*{3}}
                    \Line(60,185)(77.32,195)
                    \Line(77.32,195)(94.64,205)
                    \Line(94.64,205)(111.96,215)

                    %in the middle: the branching cover
                    %the central path
                    \put(195,95){\circle*{3}}
                    \put(195,125){\circle*{3}}
                    \put(195,155){\circle*{3}}
                    \put(195,185){\circle*{3}}
                    \Line(195,95)(195,125)
                    \Line(195,125)(195,155)
                    \Line(195,155)(195,185)
                    
                    %the bottom-left path
                    \put(177.68,85){\circle*{3}}
                    \put(160.36,75){\circle*{3}}
                    \put(143.04,65){\circle*{3}}
                    \Line(195,95)(177.68,85)
                    \Line(177.68,85)(160.36,75)
                    \Line(160.36,75)(143.04,65)
                    
                    %the bottom-right path
                    \put(212.32,85){\circle*{3}}
                    \put(229.64,75){\circle*{3}}
                    \put(246.96,65){\circle*{3}}
                    \Line(195,95)(212.32,85)
                    \Line(212.32,85)(229.64,75)
                    \Line(229.64,75)(246.96,65)
                    
                    %the top-left path
                    \put(177.68,195){\circle*{3}}
                    \put(160.36,205){\circle*{3}}
                    \put(143.04,215){\circle*{3}}
                    \Line(195,185)(177.68,195)
                    \Line(177.68,195)(160.36,205)
                    \Line(160.36,205)(143.04,215)
                    
                    %the top-right path
                    \put(212.32,195){\circle*{3}}
                    \put(229.64,205){\circle*{3}}
                    \put(246.96,215){\circle*{3}}
                    \Line(195,185)(212.32,195)
                    \Line(212.32,195)(229.64,205)
                    \Line(229.64,205)(246.96,215)

                    %the branching cover
                    \put(143.04,65){\circle{7}}
                    \put(246.96,65){\circle{7}}
                    \put(143.04,215){\circle{7}}
                    \put(246.96,215){\circle{7}}

                    %on the right: the classical cover and the 5 vertex-disjoint paths
                    %the vertices
                    \put(330,95){\circle*{3}}
                    \put(330,125){\circle*{3}}
                    \put(330,155){\circle*{3}}
                    \put(330,185){\circle*{3}}
                    \put(312.68,85){\circle*{3}}
                    \put(295.36,75){\circle*{3}}
                    \put(278.04,65){\circle*{3}}
                    \put(347.32,85){\circle*{3}}
                    \put(364.64,75){\circle*{3}}
                    \put(381.96,65){\circle*{3}}
                    \put(312.68,195){\circle*{3}}
                    \put(295.36,205){\circle*{3}}
                    \put(278.04,215){\circle*{3}}
                    \put(347.32,195){\circle*{3}}
                    \put(364.64,205){\circle*{3}}
                    \put(381.96,215){\circle*{3}}

                    %the edges
                    \Line(330,95)(330,125)
                    \Line(330,125)(330,155)
                    \Line(330,155)(330,185)
                    
                    \Line(330,95)(312.68,85)
                    \Line(312.68,85)(295.36,75)
                    \Line(295.36,75)(278.04,65)
                    
                    \Line(330,95)(347.32,85)
                    \Line(347.32,85)(364.64,75)
                    \Line(364.64,75)(381.96,65)
                    
                    \Line(330,185)(312.68,195)
                    \Line(312.68,195)(295.36,205)
                    \Line(295.36,205)(278.04,215)
                    
                    \Line(330,185)(347.32,195)
                    \Line(347.32,195)(364.64,205)
                    \Line(364.64,205)(381.96,215)

                    %the cover vertices
                    \put(330,125){\circle{7}}  %in the central path
                    \put(312.68,85){\circle{7}}  %bottom-left
                    \put(347.32,85){\circle{7}}  %bottom-right
                    \put(312.68,195){\circle{7}}  %top-left
                    \put(347.32,195){\circle{7}}  %top-right
                    
                    %the surrounding ovals for the 5 vertex-disjoint paths
                    \put(330,125){\oval(16,70)}
                    \put(295.36,75){\rotatebox{30}{\oval(55,16)}}
                    \put(364.64,75){\rotatebox{-30}{\oval(55,16)}}
                    \put(295.36,205){\rotatebox{-30}{\oval(55,16)}}
                    \put(364.64,205){\rotatebox{30}{\oval(55,16)}}
                \end{picture}
                \caption{The tree $T_2$ from Example~\ref{example:with_16_vertices} (left), its branching $3$-path vertex cover (middle), and its (classical) $3$-path vertex cover (right).}
                \label{figure:example_with_16_vertices}
            \end{figure}
            Consider the tree shown in Figure~\ref{figure:example_with_16_vertices} on the left and denote it by $T_2$. It has $16$ vertices, $15$ edges, $4$ leaves, and $2$ branching vertices. It is easy to see that $\psi_b(T_2,3)=4$ because any branching $3$-path vertex cover must contain the $4$ leaves and the $4$ leaves are sufficient, as shown in Figure~\ref{figure:example_with_16_vertices} in the middle, since the branching vertices cover the remaining paths on $3$ vertices that are not covered by the leaves. On the other hand, $\psi(T_2,3)=5$ because we can select $5$ vertex-disjoint paths on $3$ vertices each in $T_2$, as shown in Figure~\ref{figure:example_with_16_vertices} on the right, each of which must contain at least $1$ vertex of the cover, and we can verify directly that the $5$ vertices shown in Figure~\ref{figure:example_with_16_vertices} on the right are sufficient. So, we have $\psi_b(T_2,3)=4<5=\psi(T_2,3)$.
        \end{example}

        The two examples demonstrate that both inequalities $\psi_b(F,k)>\psi(F,k)$ and $\psi_b(F,k)<\psi(F,k)$ are possible.

        Unlike the classical $k$-path vertex cover, the branching variant is meaningful only for forests or, possibly, for some graphs close to forests or for some very specific classes of graphs. In arbitrary graphs, there could be many branching vertices and very few leaves, which would make the branching $k$-path vertex cover number very small. For example, in $K_n$, all vertices are branching, but there are no leaves, so we have $\psi_b(K_n,k)=0$.

        There is a trivial upper bound $\psi_b(F,k) \leq n$ for both undirected forests and rooted directed forests, and it is tight, as it is attained on the forest consisting of $n$ isolated vertices. So, the main non-trivial problem is about the lower bounds.

    \section{The directed case}
        \begin{theorem}\label{theorem:branching_k-path_vertex_cover_rooted_directed_forest_lower_bound}
            Let $F$ be a rooted directed forest on $n \geq 1$ vertices. Let $k \geq 2$ be a natural number. Then $\psi_b(F,k) \geq \frac{n+k}{2k}$. That lower bound is tight (when the expression in the lower bound is an integer).
        \end{theorem}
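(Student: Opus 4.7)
My plan is to bound $|P|$ below in two complementary ways and then combine them. Let $B$ and $L$ denote the sets of branching vertices and leaves of $F$. Consider the subgraph of $F$ induced on the non-branching vertices $V(F)\setminus B$: since each such vertex has in-degree and out-degree at most $1$ in $F$, this subgraph decomposes into a disjoint union of directed paths, which I will call \emph{chains}. Every chain ends at a unique last vertex that is either a leaf of $F$ or a non-leaf whose unique child in $F$ is a branching vertex; let $s_B$ be the number of chains of the second type. Each such chain identifies a distinct branching vertex (the unique child of its last vertex), so $s_B\le|B|$.

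The first bound is $|P|\ge|B|+1$, obtained from $L\subseteq P$ together with $|L|\ge|B|+1$; the latter is a standard edge count in the rooted forest, using that every branching vertex has out-degree at least $2$ and that there is at least one tree. The second bound is $|P|\ge n/k - |B|$, obtained chain by chain. Within a chain $C$ of length $\ell$, a pigeonhole argument on the $k$-subpaths of $C$ forces at least $\lfloor\ell/k\rfloor$ of its vertices into $P$; when $C$ ends in a leaf, that leaf is itself forced into $P$ and the lower bound rises to $\lceil\ell/k\rceil$. Using $\lceil\ell/k\rceil\ge\ell/k$ and $\lfloor\ell/k\rfloor\ge(\ell-k+1)/k$, summing over all chains with $\sum_i\ell_i = n-|B|$, and invoking $s_B\le|B|$, this yields $|P|\ge ((n-|B|)-(k-1)s_B)/k \ge (n-k|B|)/k = n/k - |B|$. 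Adding the two bounds gives $2|P|\ge(|B|+1)+(n/k-|B|)=(n+k)/k$, hence $|P|\ge(n+k)/(2k)$.

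For tightness when $(n+k)/(2k)=m+1$, i.e.\ $n=(2m+1)k$, I would construct $F$ as follows. Start from an abstract rooted binary tree on $2m+1$ vertices ($m$ internal, $m+1$ leaves), inflate every edge of this tree by inserting $k-1$ non-branching vertices just above its head, and prepend a root stem of $k-1$ non-branching vertices above the topmost internal vertex (the $m=0$ case simply gives a single directed path of $k$ vertices). The resulting forest has $(2m+1)k$ vertices, and $P=L$ (of size $m+1$) is a branching $k$-path vertex cover: each $k$-path either crosses a branching vertex or coincides with the entirety of some leaf-ending chain (which has length exactly $k$ and thus contains a leaf). The step I anticipate as most delicate is the chain minimum count --- careful bookkeeping of endpoint cases (leaf vs.\ non-leaf last vertex), short chains with $\ell<k$, and verifying that the pigeonhole bounds $\lfloor\ell/k\rfloor$ and $\lceil\ell/k\rceil$ are correct in each subcase.
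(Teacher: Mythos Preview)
Your argument is correct and takes a genuinely different route from the paper. The paper proceeds by induction on $n$: starting from an arbitrary leaf, it walks up through non-branching, non-$P$ ancestors until the walk is blocked (by a root, by a parent in $P$, or by a branching parent), deletes the resulting path of at most $k$ vertices, and applies the inductive hypothesis---with a separate case analysis when every such walk terminates at a branching vertex, in which case a deepest branching vertex and all its descendant chains are removed at once. Your proof, by contrast, is a direct double count: decomposing $V(F)\setminus B$ into chains, you obtain the two global inequalities $|P|\ge |B|+1$ and $|P|\ge n/k-|B|$ and add them so that $|B|$ cancels. This is shorter, avoids the inductive case split, and makes transparent \emph{why} the bound has the shape $(n+k)/(2k)$: branching vertices simultaneously raise the leaf count and lower the chain-covering demand by matching amounts. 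The paper's tightness examples (a caterpillar built by repeatedly attaching a $2k$-path) are a special case of your inflated-binary-tree family, so your construction is also a bit more general. The delicate point you flagged---the per-chain minima $\lfloor\ell/k\rfloor$ versus $\lceil\ell/k\rceil$ together with the inequality $\lfloor\ell/k\rfloor\ge(\ell-k+1)/k$---is indeed where the bookkeeping lives, but your treatment of it is sound, including the short-chain case $\ell<k$.
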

        \begin{proof}
            Let $P$ be an arbitrary branching $k$-path vertex cover of $F$. We prove the lower bound $|P| \geq \frac{n+k}{2k}$ using induction on $n$.
                        
            The base case of the induction is for $1 \leq n \leq k$. It is well-known that a rooted directed forest always contains at least one leaf. That leaf must belong to $P$. Therefore, we have $|P| \geq 1 \geq \frac{n+k}{2k}$.
                        
            Suppose that the statement holds for all rooted directed forests with fewer than $n$ vertices and consider it for $n$ vertices, where $n \geq k+1$. It is well-known that a rooted directed forest always contains at least one leaf. For each leaf, we construct a directed path on at most $k$ vertices ending at that leaf as follows. We start with a leaf $v_1$ in $F$, and go from it through its ancestors by reversing the unique directed path from the root to the leaf $v_1$, taking the vertices $v_1$, $v_2$, \ldots on this path. We stop when, in the directed path $v_q \ldots v_1$ of the taken vertices, the last taken vertex $v_q$ either is a root, or has a parent $u$ that is a branching vertex, or has a parent $u$ that belongs to $P$. By construction, since the process did not stop at the vertices $v_1$, \ldots, $v_{q-1}$, their parent vertices $v_2$, \ldots, $v_q$ do not belong to $P$ and are not branching vertices. The vertex $v_1$ is also not a branching vertex because it is a leaf. So, none of the vertices in the directed path $v_q \ldots v_1$ are branching vertices. And the only vertex in that directed path $v_q \ldots v_1$ that can belong to $P$ is the vertex $v_1$, which does belong to $P$ because it is a leaf. Therefore, exactly $1$ vertex in the directed path $v_q \ldots v_1$ belongs to $P$. If $q \geq k+1$, then the directed path $v_{k+1} \ldots v_2$ on $k$ vertices does not contain branching vertices or vertices belonging to $P$, which contradicts the condition of the branching $k$-path vertex cover. Therefore, we must have $q \leq k$.

            \begin{figure}[htb]
                \centering
                \setlength{\unitlength}{0.4mm}
                \begin{picture}(235,94)(26,25)
                    %$v_q$ is a root
                    \put(50,95){\circle*{3}}
                    \put(50,75){\circle*{3}}
                    \put(50,55){\circle*{3}}
                    \put(50,35){\circle*{3}}
                    \put(50,95){\vector(0,-1){19}}
                    \put(50,75){\vector(0,-1){19}}
                    \put(50,55){\vector(0,-1){19}}
                    \put(39,95){\makebox(0,0)[r]{$v_q$}}
                    \put(62,95){\makebox(0,0)[l]{root}}
                    \put(39,35){\makebox(0,0)[r]{$v_1$}}
                    \put(50,35){\circle{7}}
                    \put(50,65){\oval(16,80)}
                    
                    %$u$ is a branching vertex
                    \put(150,115){\circle*{3}}
                    \put(150,95){\circle*{3}}
                    \put(150,75){\circle*{3}}
                    \put(150,55){\circle*{3}}
                    \put(150,35){\circle*{3}}
                    \put(150,115){\vector(0,-1){19}}
                    \put(150,95){\vector(0,-1){19}}
                    \put(150,75){\vector(0,-1){19}}
                    \put(150,55){\vector(0,-1){19}}
                    \put(150,115){\vector(1,-1){19.2}}
                    \put(170,95){\circle*{3}}
                    \put(145,115){\makebox(0,0)[r]{$u$}}
                    \put(139,95){\makebox(0,0)[r]{$v_q$}}
                    \put(139,35){\makebox(0,0)[r]{$v_1$}}
                    \put(150,35){\circle{7}}
                    \put(150,65){\oval(16,80)}
                    
                    %$u \in P$
                    \put(250,115){\circle*{3}}
                    \put(250,95){\circle*{3}}
                    \put(250,75){\circle*{3}}
                    \put(250,55){\circle*{3}}
                    \put(250,35){\circle*{3}}
                    \put(250,115){\vector(0,-1){19}}
                    \put(250,95){\vector(0,-1){19}}
                    \put(250,75){\vector(0,-1){19}}
                    \put(250,55){\vector(0,-1){19}}
                    \put(243,115){\makebox(0,0)[r]{$u$}}
                    \put(239,95){\makebox(0,0)[r]{$v_q$}}
                    \put(239,35){\makebox(0,0)[r]{$v_1$}}
                    \put(250,35){\circle{7}}
                    \put(250,115){\circle{7}}
                    \put(250,65){\oval(16,80)}
                \end{picture}
                \caption{The three possible stopping conditions of the constructed path, which is encircled, from the proof of Theorem~\ref{theorem:branching_k-path_vertex_cover_rooted_directed_forest_lower_bound}. The vertices belonging to $P$ are marked with separate circles around them.}
                \label{figure:directed_constructed_path_and_3_stopping_conditions}
            \end{figure}
                        
            If there exists a leaf $v_1$ for which we stopped because $v_q$ is a root in $F$ or because $u$ belongs to $P$ and is not a branching vertex, then we remove the vertices $v_1$, \ldots, $v_q$ from $F$ and denote the resulting rooted forest on $n-q$ vertices by $H$. Since $q \leq k$ and $n \geq k+1$, we have $n-q \geq (k+1)-k=1$, which means that $H$ is not empty. Denote by $Q$ the restriction of $P$ to the vertices of $H$, that is, $Q=P \cap V(H)$. Notice that exactly $1$ of the removed vertices $v_1$, \ldots, $v_q$ belongs to $P$, namely, the leaf $v_1$. Therefore, we have $P=Q \cup \{v_1\}$ and $|P|=|Q|+1$. If we stopped because $v_q$ is a root in $F$, then all leaves in $H$ are also leaves in $F$ and thus belong to $P$. If we stopped because $u$ belongs to $P$ and is not a branching vertex, then $u$ becomes a leaf in $H$ that belongs to $P$, while all other leaves in $H$ are also leaves in $F$ and thus belong to $P$. So, in both cases, all leaves of $H$ belong to $P$, and thus to $Q$. Also, it is easy to see that every vertex in $H$ that is branching in $F$ is also branching in $H$. Indeed, the only possible vertex in $H$ whose out-degree decreases after the removal of the vertices $v_1$, \ldots, $v_q$ is $u$ (if $v_q$ is not a root and $u$ exists). But $u$ is not branching in $F$ by the assumption of the current case. All other vertices in $H$ preserve their out-degree after the removal, and hence those of them that were branching in $F$ remain branching in $H$. Take an arbitrary directed path in $H$ on $k$ vertices. Obviously, it is also a directed path in $F$. Therefore, it contains either a vertex from $P$ or a branching vertex in $F$. If it contains a vertex from $P$, then that vertex also belongs to $Q$. If the path contains a branching vertex in $F$, then that branching vertex is also a branching vertex in $H$. So, $Q$ is a branching $k$-path vertex cover of $H$. Therefore, we can apply the inductive hypothesis to $H$ and get that $|Q| \geq \frac{(n-q)+k}{2k}$. Now, we have $|P|=|Q|+1 \geq \frac{(n-q)+k}{2k}+1=\frac{n+k}{2k}+\frac{1}{2}+\frac{k-q}{2k} \geq \frac{n+k}{2k}+\frac{1}{2}>\frac{n+k}{2k}$, as required.
                    
            The only remaining case is when, for each starting leaf, we stopped because $u$ is a branching vertex (regardless of whether $u$ also belongs to $P$ or not). Then we choose a leaf $v_1$ for which the branching vertex $u$ is at the largest distance from the root of its connected component. Consider all directed paths from $u$ to leaves (not assuming that they are constructed by the described process for the leaves). Denote by $b$ the number of these directed paths (and it is equal to the number of leaves at the ends of these directed paths). Since $u$ is a branching vertex, we have $b \geq 2$. If any of these $b$ directed paths contains branching vertices other than $u$, then take the subpath from the next vertex after the last branching vertex $u'$ on that directed path to a leaf $v_1'$, and that would be the constructed directed path for $v_1'$ with the distance from $u'$ to the root larger than that from $u$ to the root (because the unique path from the root to $u'$ passes through $u$), which would contradict the choice of the leaf $v_1$. Therefore, the directed paths from $u$ to leaves do not contain branching vertices other than $u$. Since the currently considered case assumes that every constructed directed path for every leaf stopped immediately before a branching vertex, and we just proved that there are no other branching vertices except $u$ on any of the considered paths from $u$ to leaves, the constructed directed paths for those leaves must stop exactly immediately before $u$. In other words, if we remove $u$ from these directed paths from $u$ to leaves, then we get exactly the constructed directed paths for these leaves. We remove the vertices of these constructed directed paths from $F$ (keeping the branching vertex $u$) and denote the resulting rooted forest by $H$ and the number of vertices in it by $n'$. We removed $b$ constructed directed paths, each of which contains at most $k$ vertices. So, in total, we removed at most $bk$ vertices. Therefore, we have $n' \geq n-bk$. Since the branching vertex $u$ was not removed, $H$ is not empty and $n' \geq 1$. Denote by $Q$ the restriction of $P$ to the vertices of $H$ with an added vertex $u$, if it is not already in $P$, that is, $Q=(P \cap V(H)) \cup \{u\}$. From the property proved earlier that every constructed path contains exactly $1$ vertex belonging to $P$ and that vertex is the starting leaf, we obtain that exactly $b$ of the removed vertices belong to $P$, namely, the leaves. Therefore, we have either $|P|=|Q|+b$ or $|P|=|Q|+b-1$, depending on whether $u$ belongs to $P$. In both cases, we have $|P| \geq |Q|+b-1$. Let us prove that $Q$ is a branching $k$-path vertex cover of $H$. After the removal of the vertices, $u$ becomes a leaf in $H$, and, by definition, it belongs to $Q$. All other leaves in $H$ are also leaves in $F$ and thus belong to $P$ and to $Q$. Take an arbitrary directed path in $H$ on $k$ vertices. Obviously, it is also a directed path in $F$. Therefore, it contains either a vertex from $P$ or a branching vertex in $F$. If it contains a vertex from $P$, then that vertex also belongs to $Q$. If the path contains a branching vertex in $F$ that is not $u$, then that branching vertex is also a branching vertex in $H$. If the path contains a branching vertex that is $u$, then it contains a vertex from $Q$ because $u$ belongs to $Q$. In all cases, that path contains either a vertex from $Q$ or a branching vertex in $H$. So, $Q$ is a branching $k$-path vertex cover of $H$. Therefore, we can apply the inductive hypothesis to $H$ and get that $|Q| \geq \frac{n'+k}{2k}$. Now, we have $|P| \geq |Q|+b-1 \geq \frac{n'+k}{2k}+b-1 \geq \frac{(n-bk)+k}{2k}+b-1=\frac{n+k}{2k}+\frac{b-2}{2} \geq \frac{n+k}{2k}$, as required.

            \begin{figure}[htb]
                \centering
                \setlength{\unitlength}{0.4mm}
                \begin{picture}(105,108)(45,33)
                    \put(60,135){\circle*{3}}
                    \put(60,115){\circle*{3}}
                    \put(60,95){\circle*{3}}
                    \put(60,75){\circle*{3}}
                    \put(60,55){\circle*{3}}
                    \put(60,35){\circle*{3}}
                    
                    \put(60,135){\vector(0,-1){19}}
                    \put(60,115){\vector(0,-1){19}}
                    \put(60,95){\vector(0,-1){19}}
                    \put(60,75){\vector(0,-1){19}}
                    \put(60,55){\vector(0,-1){19}}

                    \put(55,135){\makebox(0,0)[r]{$v_1$}}
                    \put(55,95){\makebox(0,0)[r]{$v_k$}}
                    \put(55,35){\makebox(0,0)[r]{$v_{2k}$}}
                    
                    %the arc from the path to the root of $F_i$
                    \put(60,95){\vector(4,-1){58.8}}
                    
                    %$F_i$ is represented as a large oval with its root inside and a single outgoing arc.
                    \put(120,65){\oval(60,60)}
                    \put(120,80){\circle*{3}}
                    \put(120,80){\vector(0,-1){19}}
                    \put(135,50){\makebox(0,0){$F_i$}}
                \end{picture}
                \caption{The rooted directed forest $F_{i+1}$ from the tightness part of the proof of Theorem~\ref{theorem:branching_k-path_vertex_cover_rooted_directed_forest_lower_bound}, illustrated for $k=3$.}
                \label{figure:directed_tightness_construction}
            \end{figure}
                        
            To show that the lower bound is tight, we recursively construct a sequence of rooted directed forests $\{F_i\}$ on which the lower bound is attained. Define $F_1$ to be the rooted directed forest consisting of a directed path on $k$ vertices. For each $i \geq 1$, define $F_{i+1}$ to be a directed path $v_1 \ldots v_{2k}$ on $2k$ vertices with a copy of $F_i$ attached to the vertex $v_k$ by an additional arc from that vertex to the root of the copy of $F_i$, as shown in Figure~\ref{figure:directed_tightness_construction}. Denote by $n_i$ the number of vertices in $F_i$. By construction, we have $n_1=k$ and $n_{i+1}=n_i+2k$. So, using induction, we derive that $n_i=k(2i-1)$. Also, by construction, $F_1$ has a single leaf and $F_{i+1}$ has one more leaf than $F_i$. So, using induction, we derive that $F_i$ has exactly $i$ leaves.
            
            Now, we prove by induction on $i$ that every directed path on $k$ vertices in $F_i$ contains either a leaf or a branching vertex. For $i=1$, there is only a single directed path on $k$ vertices in $F_1$, consisting of the entire $F_1$, and it contains the leaf. Suppose that the claim holds for $F_i$ and consider it for $F_{i+1}$. Consider an arbitrary directed path on $k$ vertices in $F_{i+1}$. We separately consider three cases of where that path lies. If that path is entirely contained in the copy of $F_i$, then, by the induction hypothesis, it contains either a leaf of $F_i$ or a branching vertex of $F_i$. Since the out-degrees of the vertices in the copy of $F_i$ remain unchanged when forming $F_{i+1}$ (the only new arc incident to those vertices is incoming into the root of $F_i$ and there are no outgoing new arcs), any leaf in $F_i$ is also a leaf in $F_{i+1}$ and any branching vertex in $F_i$ is also a branching vertex in $F_{i+1}$. Therefore, the considered path contains either a leaf of $F_{i+1}$ or a branching vertex of $F_{i+1}$, as required. If the considered path is entirely contained within the path $v_1 \ldots v_{2k}$, then either it contains $v_k$, which is a branching vertex, or it is exactly the path $v_{k+1} \ldots v_{2k}$, which contains the leaf $v_{2k}$. The last remaining case is when the considered path contains vertices both from the path $v_1 \ldots v_{2k}$ and from the copy of $F_i$. Since the only arc connecting these two parts is from $v_k$ to the root of $F_i$, the considered path must contain $v_k$, which is a branching vertex. So, in all cases, the considered path contains either a leaf or a branching vertex, and the claim holds for $F_{i+1}$.
            
            Take the set $P_i$ to be the set of all $i$ leaves in $F_i$. The claim that we have just proved implies that $P_i$ is a branching $k$-path vertex cover of $F_i$. We have $|P_i|=i$ and $\frac{n_i+k}{2k}=\frac{k(2i-1)+k}{2k}=\frac{2ki}{2k}=i$. Therefore, we have $|P_i|=\frac{n_i+k}{2k}$, which means that the lower bound is attained on $F_i$.
        \end{proof}

    \section{The undirected case}
        \begin{theorem}\label{theorem:branching_k-path_vertex_cover_undirected_forest_lower_bound}
            Let $F$ be an undirected forest on $n \geq 2$ vertices. Let $k \geq 2$ be a natural number. Then $\psi_b(F,k) \geq \frac{n+3k-1}{2k}$. That lower bound is tight (when the expression in the lower bound is an integer).
        \end{theorem}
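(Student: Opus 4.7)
The plan is to reduce the undirected case to the directed case (Theorem~\ref{theorem:branching_k-path_vertex_cover_rooted_directed_forest_lower_bound}) via a rooting trick, combined with a strengthening of that theorem that exploits the fact that undirected leaves (which are forced to belong to $P$) coincide with the roots of the rooted directed trees we will construct.

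First, I would root each non-trivial tree component $T$ of $F$ at an arbitrary undirected leaf $\ell$. This rooting preserves the branching vertices: since $\ell$ has undirected degree $1$, as a root it has out-degree $1$ and is not a directed branching vertex, while every non-root vertex is directed-branching iff it is undirected-branching. So any branching $k$-path vertex cover $P$ of $T$ is also a branching $k$-path vertex cover of the rooted directed tree $T_\ell$, and crucially $\ell \in P$ because $\ell$ is an undirected leaf.

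The main step is to prove the following strengthening of Theorem~\ref{theorem:branching_k-path_vertex_cover_rooted_directed_forest_lower_bound}: \emph{if $T$ is a rooted directed tree on $n \geq 2$ vertices, $P$ is a branching $k$-path vertex cover of $T$, and the root $r$ belongs to $P$, then $|P| \geq \frac{n + 3k - 1}{2k}$.} I would prove this by induction on $n$, closely paralleling the proof of Theorem~\ref{theorem:branching_k-path_vertex_cover_rooted_directed_forest_lower_bound}. The crucial new observation is that since $r \in P$, a leaf's walk toward the root cannot end with $v_q = r$ (as the walk construction requires $v_q \notin P$), so every walk stops at some $v_q$ whose parent is in $P$ (Case A, analogous to the original Case (a)) or branching (Case B, analogous to the original Case (b)). The inductive steps mirror those of the directed proof: in Case A the estimate $|P| \geq \frac{n-q+3k-1}{2k} + 1$ gives what we need using $q \leq k$, while in Case B, after choosing $u$ with maximum distance from the root, the estimate $|P| \geq \frac{n-bk+3k-1}{2k} + b - 1$ gives what we need using $b \geq 2$. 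The degenerate sub-case of Case B where removal leaves only the root is handled by a direct computation on the resulting broom-shaped tree. The base case $2 \leq n \leq k+1$ is immediate since $|P| \geq 2$ (the root and a distinct leaf are both in $P$) while the formula is at most $2$.

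Applying the strengthened claim to each non-trivial tree component of $F$ and adding $1$ per isolated vertex yields the desired forest bound: since $n \geq 2$ forces either at least one non-trivial component or at least two isolated vertices, the resulting sum always dominates $\frac{n + 3k - 1}{2k}$. For tightness, I would define $T_1$ as the undirected path on $k+1$ vertices and recursively form $T_{i+1}$ from $T_i$ by attaching two new pendant paths of length $k$ at one of its leaves, turning that leaf into a degree-$3$ branching vertex. Each step adds $2k$ vertices and raises $\psi_b$ by exactly $1$ (the two new leaves replace the old leaf, which is no longer forced in $P$), so $|V(T_i)| = (2i-1)k + 1$ and $\psi_b(T_i, k) = i+1 = \frac{|V(T_i)| + 3k - 1}{2k}$, matching the bound whenever the formula is an integer. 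The main obstacle will be verifying the inductive bookkeeping in the strengthened claim, particularly the degenerate sub-case of Case B where removal leaves only the root.
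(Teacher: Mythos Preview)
Your approach is correct but takes a different route from the paper. You root each nontrivial component at an undirected leaf $\ell$ and then prove a \emph{strengthened} version of Theorem~\ref{theorem:branching_k-path_vertex_cover_rooted_directed_forest_lower_bound} (adding the hypothesis $r\in P$ and improving the bound to $\frac{n+3k-1}{2k}$), re-running the whole induction with this extra hypothesis. The paper instead \emph{deletes} one leaf $u$ from each component (this leaf is in $P$), roots at its neighbour $v$, and applies Theorem~\ref{theorem:branching_k-path_vertex_cover_rooted_directed_forest_lower_bound} \emph{as stated} to the resulting rooted forest $H$ on $n-p$ vertices; since the $p$ deleted vertices all lie in $P$, one gets $|P|=|Q|+p\ge \frac{(n-p)+k}{2k}+p=\frac{n+k+p(2k-1)}{2k}\ge\frac{n+3k-1}{2k}$. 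Both use the same tightness construction.

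What each buys: the paper's trick of deleting the leaf rather than rooting at it avoids any new induction---the extra ``$+\,(2k-1)$'' in the numerator comes for free from the $+p$ outside the fraction, so the undirected theorem is a three-line corollary of the directed one. Your route costs a full re-verification of the inductive machinery (including the degenerate broom sub-case you flag, which in fact only arises when the root itself is branching---impossible in your actual application since $\ell$ has out-degree~$1$, but needed for the abstract strengthened claim as stated). On the other hand, your strengthened lemma is a clean standalone statement that gives the tight per-tree bound directly, whereas the paper's argument passes through the whole forest at once.
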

        \begin{proof}
            Let $P$ be an arbitrary branching $k$-path vertex cover of $F$. Denote the number of connected components of $F$ by $p$. Clearly, we have $p \geq 1$. If a connected component of $F$ consists of a single isolated vertex, then this vertex must belong to $P$, and we remove this vertex. In each connected component of $F$ containing at least $2$ vertices, we choose an arbitrary leaf $u$ (it is well-known that a tree with at least $2$ vertices always contains a leaf). Denote its unique neighbor by $v$. We remove $u$ (along with the edge $uv$), make $v$ the root, and orient all remaining edges of that connected component away from the new root $v$. When this is done to all connected components, denote the resulting rooted directed forest by $H$. Clearly, the number of vertices in $H$ is $n-p$.

            \begin{figure}[htb]
                \centering
                \setlength{\unitlength}{0.4mm}
                \begin{picture}(243,63)(-1.5,8.5)
                    %the undirected tree on the left side
                    \put(50,50){\circle*{3}}
                    \put(30,70){\circle*{3}}
                    \put(50,70){\circle*{3}}
                    \put(30,30){\circle*{3}}
                    \put(15,10){\circle*{3}}
                    \put(45,10){\circle*{3}}
                    \put(70,30){\circle*{3}}
                    \put(55,10){\circle*{3}}
                    \put(85,10){\circle*{3}}
                    \put(15,50){\circle*{3}}
                    \put(0,65){\circle*{3}}
                    \put(0,35){\circle*{3}}
                    \put(70,70){\circle*{3}}
                    \put(90,70){\circle*{3}}

                    \Line(50,50)(30,70)
                    \Line(50,50)(50,70)
                    \Line(50,50)(30,30)
                    \Line(30,30)(15,10)
                    \Line(30,30)(45,10)
                    \Line(50,50)(70,30)
                    \Line(70,30)(55,10)
                    \Line(70,30)(85,10)
                    \Line(30,30)(15,50)
                    \Line(15,50)(0,65)
                    \Line(15,50)(0,35)
                    \Line(50,50)(70,70)
                    \Line(70,70)(90,70)

                    \put(26,70){\makebox(0,0)[r]{$u$}}
                    \put(56,50){\makebox(0,0)[l]{$v$}}

                    %the transformation arrow
                    \put(105,40){\vector(1,0){30}}

                    %the rooted directed tree on the right side
                    \put(200,50){\circle*{3}}
                    \put(200,70){\circle*{3}}
                    \put(180,30){\circle*{3}}
                    \put(165,10){\circle*{3}}
                    \put(195,10){\circle*{3}}
                    \put(220,30){\circle*{3}}
                    \put(205,10){\circle*{3}}
                    \put(235,10){\circle*{3}}
                    \put(165,50){\circle*{3}}
                    \put(150,65){\circle*{3}}
                    \put(150,35){\circle*{3}}
                    \put(220,70){\circle*{3}}
                    \put(240,70){\circle*{3}}

                    \put(200,50){\vector(0,1){19}}
                    \put(200,50){\vector(-1,-1){19.2}}
                    \put(180,30){\vector(-3,-4){14.2}}
                    \put(180,30){\vector(3,-4){14.2}}
                    \put(200,50){\vector(1,-1){19.2}}
                    \put(220,30){\vector(-3,-4){14.2}}
                    \put(220,30){\vector(3,-4){14.2}}
                    \put(180,30){\vector(-3,4){14.2}}
                    \put(165,50){\vector(-1,1){14.2}}
                    \put(165,50){\vector(-1,-1){14.2}}
                    \put(200,50){\vector(1,1){19.2}}
                    \put(220,70){\vector(1,0){19}}

                    \put(206,50){\makebox(0,0)[l]{$v$ (root)}}
                \end{picture}
                \caption{An example of the transformation of a connected component of $F$ (left) into a connected component of $H$ (right) from the proof of Theorem~\ref{theorem:branching_k-path_vertex_cover_undirected_forest_lower_bound}.}
                \label{figure:undirected_to_directed_reduction_transformation_example}
            \end{figure}

            If $H$ is empty, then all connected components of $F$ are isolated vertices, and they all belong to $P$. So, we have $|P|=n$. It remains to verify that $n \geq \frac{n+3k-1}{2k}$. That inequality is equivalent to $\frac{3}{2}+\frac{1}{2(2k-1)} \leq n$. For $k \geq 2$, we have $\frac{3}{2}+\frac{1}{2(2k-1)} \leq \frac{3}{2}+\frac{1}{2(2 \cdot 2-1)}=\frac{3}{2}+\frac{1}{6}=\frac{5}{3}<2 \leq n$. So, the required inequality holds, which completes the proof for the case of empty $H$. In what follows, we assume that $H$ is not empty, that is, $n-p \geq 1$.

            Denote by $Q$ the restriction of $P$ to the vertices of $H$, that is, $Q=P \cap V(H)$. It is easy to see that all leaves in $H$ are also leaves in $F$, and thus they belong to $Q$.
            
            Consider a vertex $x$ of $H$ that is a branching vertex in $F$. If $x$ is the new root $v$ in one of the connected components, then it has degree at least $3$ in $F$, and hence at least $2$ neighbors in $F$ other than $u$. The arcs in $H$ from $v$ to these two neighbors are oriented away from $v$. Therefore, $v$ has out-degree at least $2$ in $H$, which means that it is a branching vertex in $H$. If $x$ is not the new root $v$ of its connected component, then we consider the unique directed path from $v$ to $x$ in $H$. This path has exactly $1$ arc incident to $x$, and it is incoming into $x$. Since $x$ is a branching vertex in $F$, it has at least $2$ other edges incident to it. These two edges must be oriented away from $x$ because they extend the unique path from $v$ to $x$. Therefore, $x$ has out-degree at least $2$ in $H$, which means that it is a branching vertex in $H$. So, in both cases, $x$ is a branching vertex in $H$. This means that every vertex of $H$ that is a branching vertex in $F$ is also a branching vertex in $H$.

            Consider a directed path in $H$ on $k$ vertices. Clearly, its vertices form a path in $F$. Therefore, it contains either a branching vertex in $F$ or a vertex belonging to $P$. If it contains a branching vertex in $F$, then, from the property that we proved above, this vertex is also a branching vertex in $H$. If it contains a vertex belonging to $P$, then this vertex also belongs to $Q$. This means that $Q$ is a branching $k$-path vertex cover of $H$. By Theorem~\ref{theorem:branching_k-path_vertex_cover_rooted_directed_forest_lower_bound}, we have $|Q| \geq \frac{(n-p)+k}{2k}$.
            
            Since the removed vertex (an isolated vertex or a leaf) in each connected component belongs to $P$, we have $|Q|=|P|-p$. Therefore, we have $|P|=|Q|+p \geq \frac{(n-p)+k}{2k}+p=\frac{n+k+p(2k-1)}{2k} \geq \frac{n+k+1 \cdot (2k-1)}{2k}=\frac{n+3k-1}{2k}$, as claimed.

            \begin{figure}[htb]
                \centering
                \setlength{\unitlength}{0.4mm}
                \begin{picture}(127,64)(40,18)
                    %$F_i$ is represented as a large oval with the leaf $u$ inside
                    \put(70,50){\oval(60,60)}
                    \put(55,65){\makebox(0,0){$F_i$}}
                    \put(90,50){\circle*{3}}
                    \Line(75,50)(88,50)
                    \put(90,57){\makebox(0,0){$u$}}
                    
                    %the first new path attached to $u$
                    \put(115,60){\circle*{3}}
                    \put(140,70){\circle*{3}}
                    \put(165,80){\circle*{3}}
                    \Line(90,50)(115,60)
                    \Line(115,60)(140,70)
                    \Line(140,70)(165,80)
                    
                    %the second new path attached to $u$
                    \put(115,40){\circle*{3}}
                    \put(140,30){\circle*{3}}
                    \put(165,20){\circle*{3}}
                    \Line(90,50)(115,40)
                    \Line(115,40)(140,30)
                    \Line(140,30)(165,20)
                \end{picture}
                \caption{The forest $F_{i+1}$ from the tightness part of the proof of Theorem~\ref{theorem:branching_k-path_vertex_cover_undirected_forest_lower_bound}, illustrated for $k=3$.}
                \label{figure:undirected_tightness_construction}
            \end{figure}

            To show that the lower bound is tight, we recursively construct a sequence of forests $\{F_i\}$ on which the lower bound is attained. Define $F_1$ to be the forest consisting of a path on $k+1$ vertices. For each $i \geq 1$, take an arbitrary leaf $u$ of $F_i$ and add two new paths, each of which is on $k$ new vertices and is attached to $u$ by an additional edge, as shown in Figure~\ref{figure:undirected_tightness_construction}. Denote the resulting forest by $F_{i+1}$. Denote by $n_i$ the number of vertices in $F_i$. By construction, we have $n_1=k+1$ and $n_{i+1}=n_i+2k$. So, using induction, we derive that $n_i=k(2i-1)+1$. Now, we calculate the number of leaves in $F_i$. By construction, $F_1$ has $2$ leaves. When forming $F_{i+1}$ from $F_i$, $2$ new leaves are added in the new attached paths, and the vertex $u$ converts from a leaf into a branching vertex. Therefore, in total, $F_{i+1}$ has one more leaf than $F_i$. So, using induction, we derive that $F_i$ has exactly $i+1$ leaves.
            
            Now, we prove by induction on $i$ that every path on $k$ vertices in $F_i$ contains either a leaf or a branching vertex. For $i=1$, there are exactly $2$ paths on $k$ vertices in $F_1$, and both of them contain a leaf. Suppose that the claim holds for $F_i$ and consider it for $F_{i+1}$. Consider an arbitrary path on $k$ vertices in $F_{i+1}$. We separately consider three cases of where that path lies. If that path is entirely contained in the copy of $F_i$, then, by the induction hypothesis, it contains either a leaf of $F_i$ or a branching vertex of $F_i$. We observe that the only vertex in the copy of $F_i$ that changes its degree is $u$, and it converts from a leaf into a branching vertex. All other leaves in $F_i$ remain leaves in $F_{i+1}$, and all branching vertices in $F_i$ remain branching vertices in $F_{i+1}$. Therefore, the considered path contains either a leaf of $F_{i+1}$ or a branching vertex of $F_{i+1}$, as required. If the considered path is entirely contained within one of the two new paths on $k$ vertices attached to $u$, then it comprises this entire new path and hence it contains the leaf at the end of it. The last remaining case is when the considered path contains vertices from at least two of the three parts: the copy of $F_i$ and the two new paths attached to $u$. Since these three parts are connected only through the vertex $u$, the considered path must contain $u$, which is a branching vertex. So, in all cases, the considered path contains either a leaf or a branching vertex, and the claim holds for $F_{i+1}$.
            
            Take the set $P_i$ to be the set of all $i+1$ leaves in $F_i$. The claim that we have just proved implies that $P_i$ is a branching $k$-path vertex cover of $F_i$. We have $|P_i|=i+1$ and $\frac{n_i+3k-1}{2k}=\frac{(k(2i-1)+1)+3k-1}{2k}=\frac{2k(i+1)}{2k}=i+1$. Therefore, we have $|P_i|=\frac{n_i+3k-1}{2k}$, which means that the lower bound is attained on $F_i$.
        \end{proof}

    \printbibliography
\end{document}